\newtheorem{df}{Definition}[section]
\newtheorem{thm}{Theorem}[section]
\newtheorem{lem}{Lemma}[section]
\newtheorem{pro}{Proposition}[section]
\newenvironment {proof} {\noindent{\em Proof.}}{\hspace*{\fill}$\Box$\par\vspace{4mm}}
\def\qed{\hfill \nopagebreak\rule{5pt}{8pt}}
\title{Conflict-free connection number of random graphs}
\author{
\small  Ran Gu$^1$,  Xueliang Li$^2$\\
\small $^1$College of Science, Hohai University,\\
\small Nanjing, Jiangsu 210098, China\\
\small $^2$Center for Combinatorics and LPMC\\
\small Nankai University, Tianjin 300071, China\\
\small Emails: rangu@hhu.edu.cn; lxl@nankai.edu.cn
\\
\date{}}
\begin{document}
\maketitle
\begin{abstract}
 An edge-colored graph $G$ is conflict-free connected if any two of its vertices are connected by a path which contains a color used on exactly one of its edges. The conflict-free connection number of a connected graph $G$, denoted by $cfc(G)$, is the smallest number of colors needed in order to make $G$ conflict-free connected.  In this paper, we show that almost all graphs have the conflict-free connection number 2. More precisely, let $G(n,p)$ denote the Erd\H{o}s-R\'{e}nyi random graph model, in which each of the $\binom{n}{2}$ pairs of vertices appears as an edge with probability $p$ independent from other pairs.  We prove that for sufficiently large $n$, $cfc(G(n,p))\le 2$ if $p\ge\frac{\log n +\alpha(n)}{n}$, where $\alpha(n)\rightarrow \infty$. This means that as soon as $G(n,p)$ becomes connected with high probability, $cfc(G(n,p))\le 2$. \\[2mm]
\textbf{Keywords:} edge-coloring; conflict-free connection number;  random graphs.\\
\textbf{AMS subject classification 2010:} 05C15, 05C40, 05C80.\\
\end{abstract}

\section{Introduction}

All graphs in this paper are finite, simple and undirected. We follow \cite{BM} for graph theoretical notation and terminology not defined here. Let $G$ be a nontrivial connected graph with an {\it edge-coloring} $c : E(G)\rightarrow \{1, 2, \ldots, t\},
\ t \in \mathbb{N}$, where adjacent edges may have the same color. If adjacent edges of $G$ are assigned different colors by $c$, then $c$ is a proper (edge-)coloring. For a graph $G$, the minimum number of colors needed in a proper coloring of $G$ is referred to as the edge-chromatic number of $G$ and denoted by $\chi'(G)$. A path of $G$ is said to be a {\it rainbow path} if no two edges on the path have the same color. The graph $G$ is called {\it rainbow connected} if for any two vertices of $G$ there is a rainbow path of $G$ connecting them. An edge-coloring of a connected graph is a {\it rainbow connecting coloring} if it makes the graph rainbow connected. This concept of rainbow connection of graphs was introduced by Chartrand et al. \cite{CJMZ} in 2008. The \emph{rainbow connection number} $rc(G)$ of a connected graph $G$, is the smallest number of colors that are needed in order to make $G$ rainbow connected.   The interested readers can see \cite{LSS,LS, LS1} for surveys on this topic.

Motivated by rainbow coloring and proper coloring in graphs, Andrews et al. \cite{ALLZ} and Borozan et al. \cite{BFGMMMT} independently introduced the concept of proper-path coloring. Let $G$ be a nontrivial connected graph with an edge-coloring. A path in $G$ is called a \emph{proper path} if no two adjacent edges of the path are colored the same. An edge-coloring of a connected graph $G$ is a \emph{proper-path coloring} if every pair of distinct vertices of $G$ are connected by a proper path in $G$. The minimum number of colors that are needed to produce a proper-path coloring of $G$ is called the \emph{proper connection number} of $G$, denoted by $pc(G)$. From the definition, it follows that $1\le pc(G)\le min\{ rc(G), \chi'(G)\}\le m$, where $\chi'(G)$ is the chromatic index of $G$ and $m$ is the number of edges of $G$. For more details we refer to \cite{LM,LMQ}.

A coloring of the vertices of a hypergraph $\mathcal{H}$ is called \emph{conflicted-free} if each hyperedge $F$ of $\mathcal{H}$ has a vertex of unique color that is not repeated in $F$. The smallest
number of colors required for such a coloring is called the \emph{conflict-free chromatic number} of $\mathcal{H}$. This parameter was first introduced by Even et al. \cite{ELRS} in a geometric setting, in connection with frequency assignment problems for cellular networks. There are many results on the conflict-free coloring, see \cite{CKP,CT,PT}.

Motivated by the conflict-free colorings of hypergraphs and the rainbow and proper connections of graphs, Czap et al. \cite{CJV} introduced the concept of conflict-free connection for graphs. An edge-colored graph $G$ is called \emph{conflict-free connected} if each pair of distinct vertices is connected by a path which contains at least one color used on exactly one of its edges. This path is called a \emph{conflict-free path}, and this coloring is called a \emph{conflict-free connection coloring} of $G$. The \emph{conflict-free connection number} of a connected graph $G$, denoted by $cfc(G)$, is the smallest number of colors needed to color the edges of $G$ so that $G$ is conflict-free connected. It is easy to see that the parameter $cfc(G)$ has monotone property, i.e., for any connected spanning subgraph $G'$ of a graph $G$, one has $cfc(G)\le cfc(G')$. There are quite many results in the study of conflict-free connection of graphs, see \cite{CDHJLS, CHLMZ, CJV, DLLMZ, LZZMZJ}.

The study on rainbow connectivity of random graphs has attracted the interest of many researchers, see \cite{Caro, FT, HL}. In \cite{GLQ}, Gu at el. determined the proper connection number of random graphs. In this paper, we will focus on the parameter $cfc(G)$ for random graphs. The most frequently occurring probability model of random graphs is the Erd\H{o}s-R\'{e}nyi random graph model $G(n,p)$ \cite{ER}. The model $G(n,p)$ consists of all graphs with $n$ vertices in which the edges are chosen independently and with probability $p$. We say an event $\mathcal{A}$ happens \textit{with high  probability} if the probability that it happens approaches $1$ as $n\rightarrow \infty $, i.e., $Pr[\mathcal{A}]=1-o_n(1)$. Sometimes, we say \textit{w.h.p.} for short.
We will always assume that $n$ is the variable that tends to infinity.

Let $G$ and $H$ be two graphs on $n$ vertices. A property $P$ is said to be \emph{monotone} if whenever $G\subseteq H$ and $G$ satisfies $P$, then $H$ also satisfies $P$. For any property $P$ of graphs and any positive integer $n$, define $Prob(P, n)$ to be the ratio of the number of graphs with $n$ labeled vertices having the property $P$ divided by the total number of graphs with these vertices. If $Prob(P, n)$ approaches 1 as $n$ tends to infinity, then we say that almost all graphs have the property $P$. Similarly, for a fixed integer $r$, we say that almost all $r$-regular graphs have the property $P$ if the ratio of the number
of $r$-regular graphs with $n$ labeled vertices having property $P$ divided by the total number of $r$-regular graphs with these vertices approaches 1 as $n$ tends to infinity.

There are many results in the literature asserting that
almost all graphs have some property. Here we list some of them, which are related to our study on the conflict-free connection number of random graphs.

\begin{thm}\label{thac}\cite{BH}
For every $k \in \mathbb{N}$, almost all graphs are $k$-connected.
\end{thm}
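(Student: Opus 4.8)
The plan is to use the counting definition given above: ``almost all graphs'' refers to the uniform distribution on the $2^{\binom{n}{2}}$ labeled graphs on $n$ vertices, which is exactly the model $G(n,1/2)$ in which every pair of vertices forms an edge independently with probability $1/2$. Hence it suffices to prove that, for each fixed $k$, the graph $G(n,1/2)$ is $k$-connected with high probability. I would establish this by a first-moment (union-bound) estimate over all small vertex cuts.

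First I would record the characterization of failure: for $n>k$, a graph $G$ fails to be $k$-connected precisely when some set $S\subseteq V(G)$ with $|S|\le k-1$ disconnects $G$. In that case $V(G)\setminus S$ splits into two nonempty parts $A,B$ with no edge between them. Taking a union bound over all such triples $(S,A,B)$, and writing $s=|S|$ and $a=|A|\le (n-s)/2$, I would obtain
\[
\Pr\bigl[G(n,1/2)\text{ is not }k\text{-connected}\bigr]\le\sum_{s=0}^{k-1}\binom{n}{s}\sum_{a=1}^{\lfloor (n-s)/2\rfloor}\binom{n-s}{a}\,2^{-a(n-s-a)},
\]
where $2^{-a(n-s-a)}$ is the probability that none of the $a(n-s-a)$ potential edges between $A$ and $B$ is present.

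It then remains to show that the right-hand side tends to $0$. Setting $m=n-s$ and using $\binom{m}{a}\le m^{a}$ together with $m-a\ge m/2$ for $a\le m/2$, I would bound each inner term by $(m\,2^{-m/2})^{a}$; for large $n$ the base $m\,2^{-m/2}$ is less than $1$, so the geometric series sums to an inner contribution of order $m\,2^{-m/2}=O(n\,2^{-n/2})$. Since $k$ is fixed, the outer factor satisfies $\binom{n}{s}\le n^{k-1}$, and the entire bound is $O(n^{k}\,2^{-n/2})\to 0$, which proves the claim.

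The dominant configuration is the extreme term $a=1$, corresponding to a single vertex joined to at most $k-1$ others (an almost-isolated vertex); this is the case that governs the connectivity threshold, and is exactly what forces the logarithmic edge-probability regime appearing in the main theorem, although at $p=1/2$ it is already exponentially unlikely. The only point requiring a little care is arranging the sum over the part size $a$ so that the binomial coefficients never outweigh the edge-absence probabilities, and the geometric bound above handles this uniformly; no genuinely hard step arises.
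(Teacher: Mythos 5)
Your proof is correct. Note, however, that the paper contains no proof of this statement at all: Theorem \ref{thac} is quoted verbatim from Blass and Harary \cite{BH}, so there is no internal argument to compare yours against. Your argument is a legitimate, self-contained substitute. It differs from the classical proof in \cite{BH}, which derives $k$-connectivity (together with many other properties) from the \emph{extension axiom}: for all disjoint vertex sets $U,W$ with $|U|+|W|\le k$, almost every graph contains a vertex adjacent to everything in $U$ and to nothing in $W$. That route is attractive because one union bound over the extension events simultaneously yields $k$-connectivity, diameter $2$, non-planarity, etc., and it is the engine behind the first-order zero-one law. Your direct first-moment count over triples $(S,A,B)$ --- a cut set $S$ with $|S|\le k-1$ and a bipartition $A\cup B$ of the rest with no crossing edges --- is more specialized but also more quantitative: it gives the explicit failure probability $O(n^{k}2^{-n/2})$, and the steps check out: the identification of the uniform model with $G(n,1/2)$ matches the paper's definition of ``almost all graphs''; the characterization of failure of $k$-connectivity for $n>k$ is right (complete graphs never lie in the failure event, and $s=0$ covers disconnection); and the geometric-series bound $\binom{m}{a}2^{-a(m-a)}\le\bigl(m\,2^{-m/2}\bigr)^{a}$ for $a\le m/2$, with $m=n-s\ge n-k+1$ and $k$ fixed, is uniform in $s$ and sums to $o(1)$ as claimed.
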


\begin{thm}\label{thar}\cite{RW}
For fixed $r\ge 3$, almost all $r$-regular graphs are Hamiltonian.
\end{thm}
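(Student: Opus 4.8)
The plan is to realize the uniform random $r$-regular graph through the configuration (pairing) model and then apply the second moment method in the refined form known as the \emph{small subgraph conditioning method} of Robinson and Wormald. In the pairing model one assigns $r$ half-edges (\emph{points}) to each of the $n$ vertices and chooses a uniformly random perfect matching on the $rn$ points; reading off which vertices get paired yields a random (multi)graph, and each simple $r$-regular graph occurs with the same probability. Since the probability that a random pairing produces a simple graph tends to the positive constant $e^{(1-r^2)/4}$, it suffices to prove that the number $X=X_n$ of Hamilton cycles in a random pairing is positive with probability bounded away from $0$; in fact the method will give $\Pr[X>0]\to 1$ in the pairing model, and since $\Pr[X=0\mid \text{simple}]\le \Pr[X=0]/\Pr[\text{simple}]\to 0$, conditioning on simplicity transfers the conclusion to the uniform model.

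First I would compute the first moment by a standard counting argument in the pairing model: there are $(n-1)!/2$ Hamilton cycles on the vertex set, and for each one the number of pairings containing its $n$ edges is readily counted, so that dividing by the total number of pairings yields an explicit asymptotic formula for $E[X]$. The essential feature is that $E[X]\to\infty$. A naive second moment estimate would then try to show $E[X^2]/E[X]^2\to 1$, but this is \emph{false}: two Hamilton cycles that share the edges of a short cycle are positively correlated, and the ratio instead tends to a constant strictly larger than $1$. This is exactly the situation the small subgraph conditioning method is designed to handle.

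The core of the argument is to track the short cycles. For each fixed $k\ge 3$ let $Y_k$ be the number of $k$-cycles in the pairing; it is classical that $(Y_3,Y_4,\dots,Y_m)$ converge jointly, as $n\to\infty$, to independent Poisson random variables with means $\lambda_k=\frac{(r-1)^k}{2k}$. I would then compute the joint factorial moments of $X$ with the $Y_k$, showing that
\[
\frac{E\bigl[X\,(Y_3)_{j_3}\cdots (Y_m)_{j_m}\bigr]}{E[X]}\ \longrightarrow\ \prod_{k=3}^{m}\bigl(\lambda_k(1+\delta_k)\bigr)^{j_k}
\]
for explicit constants $\delta_k$ that decay geometrically in $k$, so that $\sum_{k\ge 3}\lambda_k\delta_k^2<\infty$. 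The decisive analytic step is the second moment computation itself, namely verifying
\[
\frac{E[X^2]}{E[X]^2}\ \longrightarrow\ \exp\!\Bigl(\sum_{k\ge 3}\lambda_k\delta_k^2\Bigr).
\]
Granting these estimates, the small subgraph conditioning theorem yields that $X/E[X]$ converges in distribution to $W=\prod_{k\ge 3}(1+\delta_k)^{Y_k}e^{-\lambda_k\delta_k}$, a random variable that is strictly positive almost surely; hence $\Pr[X>0]\to\Pr[W>0]=1$, which is the desired conclusion.

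The main obstacle is the verification of the second moment asymptotics. This requires classifying pairs of Hamilton cycles according to how their edge sets overlap, that is, by the number and lengths of their maximal shared paths, and estimating the number of pairings consistent with each overlap type. The resulting sum must be evaluated by a Laplace/saddle point analysis, and the subtle point is to show that the correction factor produced by the overlaps is \emph{exactly} accounted for by the short-cycle contribution $\exp\bigl(\sum_k\lambda_k\delta_k^2\bigr)$, rather than by some uncontrolled larger term. Matching these two computations precisely is what allows the second moment method to succeed once the fluctuations caused by short cycles have been conditioned out.
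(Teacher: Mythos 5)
The paper does not prove this statement at all: Theorem \ref{thar} is quoted from the reference \cite{RW} and is used purely as a black box, in combination with Theorem \ref{thd2} and the monotonicity of $cfc$, to conclude that almost all $r$-regular graphs have conflict-free connection number $2$. So there is no internal proof to compare against; the correct benchmark is the cited Robinson--Wormald paper itself, and your proposal is a faithful reconstruction of its method: realize the uniform random $r$-regular graph through the pairing model (your constant $e^{(1-r^2)/4}$ for the asymptotic probability of simplicity is right), observe that the naive second moment method fails because pairs of Hamilton cycles sharing short cycles are positively correlated, and repair this by small subgraph conditioning on the short cycle counts $Y_k$, whose limiting Poisson parameters $\lambda_k=(r-1)^k/(2k)$ you also state correctly.

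That said, what you have written is a roadmap rather than a proof, and two substantive gaps remain. First, essentially the entire content of \cite{RW} lies in the two limit computations you explicitly ``grant'': the joint factorial moments $E[X(Y_3)_{j_3}\cdots(Y_m)_{j_m}]/E[X]$ and, above all, the asymptotics of $E[X^2]/E[X]^2$, which require classifying pairs of Hamilton cycles by their overlap structure and performing a saddle-point evaluation of the resulting sum; this cannot be waved through, because the method succeeds only if the limit equals \emph{exactly} $\exp\bigl(\sum_k\lambda_k\delta_k^2\bigr)$, as you yourself note. Second, your concluding step ``hence $\Pr[W>0]=1$'' silently assumes $\delta_k>-1$ for every $k$. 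This is not automatic: in other applications of small subgraph conditioning one encounters $\delta_k=-1$, in which case $W=0$ exactly when $Y_k>0$, an event of asymptotically positive probability, and the conclusion fails. Verifying $\delta_k>-1$ requires the explicit formula for $\delta_k$, which again comes only out of the moment computation you deferred. With those computations supplied, your argument is precisely the Robinson--Wormald proof; without them, the decisive steps are still missing.
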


In \cite{CJV}, Czap et al. got the following result.
\begin{thm}\label{thd2}
If G is a noncomplete 2-connected graph, then $cfc(G)=2$.
\end{thm}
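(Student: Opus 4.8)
The plan is to establish the two inequalities $cfc(G)\ge 2$ and $cfc(G)\le 2$ separately. The lower bound is immediate from the definition: if $G$ is colored with a single color, then a path is conflict-free only when it consists of a single edge (otherwise its unique color is repeated), so a one-colored graph is conflict-free connected if and only if every pair of vertices is adjacent, i.e. if and only if $G$ is complete. Since $G$ is noncomplete, $cfc(G)\ge 2$, and it remains to exhibit a conflict-free connection coloring with two colors.

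For the upper bound, my key observation is that a path $P$ is automatically conflict-free as soon as it contains an edge whose color appears nowhere else on $P$. Hence, if I color exactly one edge $e^{*}=ab$ with color $2$ and all remaining edges with color $1$, then any simple path that uses $e^{*}$ is conflict-free, since color $2$ occurs exactly once on it. So it suffices to locate a single edge $e^{*}$ with the property that every pair of vertices of $G$ is joined by some simple path through $e^{*}$; the whole problem then reduces to this purely structural claim about $2$-connected graphs.

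To prove the structural claim I would use the open ear decomposition $G=C_{0}\cup P_{1}\cup\cdots\cup P_{t}$ guaranteed for every $2$-connected graph, and take $e^{*}$ to be an arbitrary edge of the initial cycle $C_{0}$. First I verify the base case: on a cycle, for any two vertices one of the two arcs between them contains $e^{*}$, and that arc is a simple path through $e^{*}$. Then I argue by induction on the ears, maintaining the invariant that in $H_{i}:=C_{0}\cup P_{1}\cup\cdots\cup P_{i}$ every pair of vertices has a simple path through $e^{*}$. When adding an ear $P_{i+1}$ with distinct endpoints $x,y\in H_{i}$ and new internal vertices, a pair of old vertices is handled by the induction hypothesis; for a new internal vertex $u$ I route along the ear from $u$ to one endpoint (say $x$) and then append an old $x$-to-target path through $e^{*}$ in $H_{i}$; two new vertices on the same ear are connected by sending one out to $x$ and the other out to $y$ and bridging these by an old $x$-$y$ path through $e^{*}$.

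The main obstacle is ensuring that these concatenations are genuinely \emph{simple} paths, and this is exactly where $2$-connectivity (via the open ear decomposition with $x\ne y$) is used: the internal vertices of the new ear lie outside $H_{i}$, so the ear-portion of each route is automatically vertex-disjoint from the $H_{i}$-portion carrying $e^{*}$, the two pieces meeting only at the shared endpoint. A little care is needed for degenerate targets (for instance, when the target old vertex equals $x$, route $u$ out to $y$ instead and use an $x$-$y$ path, which exists since $x\ne y$), but no case ever requires more than the single color-$2$ edge $e^{*}$. Combining this coloring with the lower bound yields $cfc(G)=2$.
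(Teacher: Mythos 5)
Your proof is correct, but note that the paper you were given never proves this statement: Theorem \ref{thd2} is quoted from Czap, Jendrol' and Valiska \cite{CJV}, so there is no in-paper argument to compare against, and your write-up serves as a self-contained substitute. Both halves of your argument are sound. The lower bound is the right one-color observation for noncomplete graphs. For the upper bound, your reduction to the structural claim that some edge $e^{*}$ lies on a simple path between every pair of vertices is exactly the right move, and your induction along an open ear decomposition establishes it: the case analysis (old--old by hypothesis; old--new with the degenerate target $z=x$ rerouted through $y$; new--new split out to $x$ and $y$) is precisely what is needed to keep the concatenated walks simple, and you correctly identify openness of the ears ($x\ne y$) as the point where 2-connectivity is used. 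One remark for economy: the structural claim actually holds for \emph{every} edge of $G$, not just an edge of the initial cycle, and it has a shorter proof from a standard fact --- in a 2-connected graph any two edges lie on a common cycle. Given vertices $u,v$ and the edge $e^{*}$, add the edge $uv$ if it is absent (this preserves 2-connectivity), take a cycle through both $uv$ and $e^{*}$, and delete $uv$ to obtain a simple $u$--$v$ path containing $e^{*}$. Your ear-decomposition induction is equally valid and has the advantage of being fully self-contained.
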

In \cite{CHLMZ}, the authors weaken the condition of the above theorem and got the following
result.

\begin{thm}\label{th0}\cite{CHLMZ,DLLMZ}
Let $G$ be a noncomplete 2-edge-connected graph. Then $cfc(G)=2$.
\end{thm}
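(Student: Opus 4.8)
The plan is to prove the two inequalities $cfc(G)\ge 2$ and $cfc(G)\le 2$ separately, with essentially all of the work in the upper bound. For the lower bound, note that under a single color a path is conflict-free only when it consists of a single edge; hence a $1$-coloring makes $G$ conflict-free connected precisely when every pair of vertices is adjacent, i.e. when $G$ is complete. As $G$ is noncomplete, this forces $cfc(G)\ge 2$.

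For the upper bound I would exhibit an explicit $2$-coloring, using colors $1$ and $2$, built along a closed-ear decomposition $G=C_0\cup P_1\cup\cdots\cup P_k$, which exists precisely because $G$ is $2$-edge-connected. Writing $H_i=C_0\cup\cdots\cup P_i$, I would maintain the following composable invariant $(\star)$: for every pair of distinct vertices of $H_i$ there is a simple path joining them on which color $2$ is used \emph{exactly once}. Any such path is conflict-free, so $(\star)$ for $H_k=G$ is exactly the conclusion we want. The point of phrasing the invariant through ``color $2$ used exactly once'' is that it composes well: prepending or appending a segment that uses only color $1$ to a path on which color $2$ appears once yields a path on which color $2$ still appears exactly once.

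Concretely, I would color $C_0$ by giving one of its edges color $2$ and all others color $1$, so that for any two vertices of $C_0$ the arc between them containing the special edge verifies $(\star)$. For an \emph{open} ear $P_i=x w_1\cdots w_s y$ (with $x\ne y$ and $w_1,\dots,w_s$ new) I would color every ear edge with color $1$: a new vertex $w_j$ is joined to an old vertex $z$ by following the ear to an endpoint (color $1$ only) and then continuing by a $(\star)$-path inside $H_{i-1}$, while two new vertices $w_j,w_{j'}$ are joined by running from $w_j$ to $x$, crossing $H_{i-1}$ from $x$ to $y$ via a $(\star)$-path, and returning along the ear to $w_{j'}$. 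In each case color $2$ is used exactly once, and disjointness of the new internal vertices from $H_{i-1}$ makes the concatenations simple. A \emph{closed} ear $P_i=x w_1\cdots w_s x$ must be handled differently, since the all-color-$1$ recipe would leave two internal vertices joined only by monochromatic arcs; here I would again mark a single ear edge with color $2$, so that two internal vertices use the arc through that edge, while a new vertex reaches $x$ along the complementary (color-$1$) arc before entering $H_{i-1}$.

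The main obstacle is exactly the failure of conflict-free paths to concatenate: gluing a conflict-free $u$--$t$ path to a conflict-free $t$--$v$ path need not produce a conflict-free $u$--$v$ path. Choosing the invariant $(\star)$ is what defeats this obstacle, because color-$1$ segments are ``free'' to append without disturbing the unique color-$2$ edge. The remaining care is bookkeeping: verifying that the produced paths are genuinely simple (which holds because the internal vertices of the current ear are new), treating the degenerate endpoint cases $z\in\{x,y\}$, and isolating the closed-ear case where a single extra special edge must be inserted. Once $(\star)$ is checked to survive every ear addition, $H_k=G$ is conflict-free connected, yielding $cfc(G)\le 2$ and hence $cfc(G)=2$.
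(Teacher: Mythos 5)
Your proposal is correct, but there is nothing in this paper to compare it against line by line: Theorem \ref{th0} is imported from \cite{CHLMZ,DLLMZ} as a known result and is never proved here (the paper only uses it, via the monotonicity of $cfc$, to handle the dense range of $p$). Judged on its own, your argument is sound and self-contained. The lower bound observation (a $1$-coloring is conflict-free connecting iff every pair of vertices is adjacent) is exactly right, and the upper bound via a closed-ear decomposition works: the invariant $(\star)$ that every pair is joined by a simple path using color $2$ \emph{exactly once} is the correct strengthening of ``conflict-free connected,'' since it survives concatenation with color-$1$ segments, which plain conflict-freeness does not. Your case analysis is complete once the flagged bookkeeping is done --- in the open-ear case the degenerate pair $(w_j,x)$ must be routed through the other endpoint $y$ (possible since $x\neq y$), and in the closed-ear case the pair $(w_j,x)$ must use the arc containing the special edge rather than the complementary arc. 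It is worth noting that the closed-ear case is not a technicality but the crux: an open-ear decomposition exists only for $2$-connected graphs, which is the weaker Theorem \ref{thd2} of Czap et al.; allowing closed ears, each carrying its own single color-$2$ edge, is precisely what extends the statement to $2$-edge-connected graphs (which may have cut vertices). So your route gives an elementary inductive proof of the stronger cited theorem, independent of the arguments in \cite{CHLMZ,DLLMZ}.
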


From Theorem \ref{thac}, it is easy to get that almost all graphs are 2-connected. Hence, with Theorem \ref{thd2} or Theorem \ref{th0}, we have
\begin{thm}\label{thap}
Almost all graphs have the conflict-free connection number 2.
\end{thm}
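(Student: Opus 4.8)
The plan is to combine the structural result of Theorem \ref{thac} with the sharp value given by Theorem \ref{thd2} (equivalently Theorem \ref{th0}), interpreting ``almost all graphs'' in the sense defined above, i.e.\ the uniform model on $n$ labeled vertices. First I would invoke Theorem \ref{thac} with $k=2$ to conclude that almost all graphs are $2$-connected; that is, the proportion of graphs on $n$ labeled vertices that fail to be $2$-connected tends to $0$ as $n\to\infty$.

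Next I would record that almost all graphs are noncomplete, which is immediate by counting: among the $2^{\binom{n}{2}}$ labeled graphs on $n$ vertices exactly one is complete, so the proportion of complete graphs is $2^{-\binom{n}{2}}\to 0$. Writing the exceptional set as the union of the non-$2$-connected graphs and the (single) complete graph, a union bound on the relative sizes of these two sets shows that almost all graphs are simultaneously noncomplete and $2$-connected.

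Finally, I would apply Theorem \ref{thd2}: every noncomplete $2$-connected graph $G$ satisfies $cfc(G)=2$. Hence the class of graphs with $cfc(G)=2$ contains the class of noncomplete $2$-connected graphs, whose proportion tends to $1$, and therefore almost all graphs have conflict-free connection number $2$.

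Since every step is a direct citation or an elementary counting estimate, I do not anticipate a genuine obstacle; the only point deserving a moment's care is confirming that the value is \emph{exactly} $2$ rather than merely at most $2$. This is guaranteed because $cfc(G)=1$ forces $G$ to be complete: with a single color, a path of length at least two has its unique color repeated and hence is not conflict-free, so conflict-free connectivity with one color requires every pair of vertices to be adjacent. As the complete graph has already been excluded in the second step, the noncomplete $2$-connected graphs indeed have $cfc(G)=2$.
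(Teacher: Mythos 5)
Your proposal is correct and follows essentially the same route as the paper: invoke Theorem \ref{thac} with $k=2$ to get that almost all graphs are $2$-connected, then apply Theorem \ref{thd2} (or Theorem \ref{th0}). The only difference is that you make explicit the (trivial) noncompleteness step and the reason the value is exactly $2$, details the paper leaves implicit.
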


Even if we concentrate on regular graphs, from Theorems \ref{thar} and \ref{thd2} or \ref{th0}, and the monotone property of $cfc(G)$, we also have the following result.
\begin{thm}
For fixed $r\ge 3$, almost all $r$-regular graphs have the conflict-free connection number 2.
\end{thm}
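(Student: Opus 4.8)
The plan is to sandwich $cfc(G)$ between $2$ and $2$ for almost all $r$-regular graphs $G$, using the Hamiltonicity guaranteed by Theorem~\ref{thar} together with the monotone property of $cfc$. The whole argument is an assembly of ingredients already available in the excerpt, so I would organize it as an upper bound followed by a matching lower bound.

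First I would establish the upper bound. By Theorem~\ref{thar}, for fixed $r\ge 3$ almost all $r$-regular graphs are Hamiltonian, so it suffices to bound $cfc(G)$ for a Hamiltonian $r$-regular graph $G$ on $n$ vertices. Such a $G$ contains a Hamiltonian cycle $C_n$ as a connected spanning subgraph. For $n\ge 4$ the cycle $C_n$ is a noncomplete $2$-connected (indeed $2$-edge-connected) graph, so Theorem~\ref{thd2} (or Theorem~\ref{th0}) gives $cfc(C_n)=2$. Applying the monotone property of $cfc$ to the spanning subgraph $C_n\subseteq G$ then yields $cfc(G)\le cfc(C_n)=2$.

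Next I would argue the matching lower bound. One checks the elementary fact that $cfc(H)=1$ forces $H$ to be complete: with a single color available, every edge receives that color, so a path is conflict-free only when it consists of a single edge, whence every pair of vertices must be adjacent. For fixed $r$ and all $n>r+1$, an $r$-regular graph is not $(n-1)$-regular and hence not complete, so $cfc(G)\ge 2$. Combining the two bounds gives $cfc(G)=2$ for all sufficiently large $n$.

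I do not expect a genuine obstacle here, as the statement is a clean consequence of Theorem~\ref{thar}, the exact value $cfc(C_n)=2$ from Theorem~\ref{thd2}/\ref{th0}, and monotonicity. The only points requiring care are (i) verifying the elementary lower bound $cfc(G)\ge 2$, so that the equality rather than just $cfc(G)\le 2$ is justified, and (ii) confirming that ``almost all'' transfers correctly: since the family of Hamiltonian $r$-regular graphs already has asymptotic density $1$ among all $r$-regular graphs, no further probabilistic estimate is needed, and the conclusion $cfc(G)=2$ holds for almost all $r$-regular graphs.
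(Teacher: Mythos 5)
Your proposal is correct and follows essentially the same route as the paper: Theorem~\ref{thar} gives Hamiltonicity for almost all $r$-regular graphs, Theorem~\ref{thd2} (or~\ref{th0}) gives $cfc(C_n)=2$ for the spanning Hamiltonian cycle, and the monotone property yields $cfc(G)\le 2$. Your explicit verification of the lower bound ($cfc(G)=1$ forces completeness, which fails for $r$-regular graphs once $n>r+1$) is a detail the paper leaves implicit, but it is exactly the intended justification for the equality.
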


Let $C(G)$ denote the subgraph of a graph $G$ induced on the set of cut-edges of $G$. Recall that a linear forest is a forest where each of its components is a path. The following Theorem will be used in the sequel.

\begin{thm}\label{cjv}\cite{CJV} If $G$ is a connected graph, and $C(G)$ is a linear forest in which each component is of order 2, then $cfc(G) = 2$.
\end{thm}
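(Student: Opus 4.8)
My plan is to establish the upper bound $cfc(G)\le 2$ by an explicit $2$-coloring; the lower bound $cfc(G)\ge 2$ is automatic, since a connected graph with a cut edge other than $K_2$ is noncomplete and only complete graphs have $cfc=1$. First I would unwind the hypothesis: saying that $C(G)$ is a linear forest all of whose components have order $2$ says exactly that the cut edges of $G$ form a matching, i.e.\ no two bridges share an endpoint. Deleting all bridges decomposes $G$ into its bridgeless components $B_1,\dots,B_s$, which partition $V(G)$ and are linked by the bridges into a tree $\mathcal{T}$ (the bridge tree) whose nodes are the $B_i$ and whose edges are the bridges. Each $B_i$ is either a single vertex or a $2$-edge-connected graph on at least three vertices. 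The decisive use of the matching hypothesis is this: a single-vertex block can be incident to at most one bridge, since two bridges meeting at a vertex would be adjacent cut edges; consequently every block incident to two or more bridges (every \emph{internal} node of $\mathcal{T}$) is a genuine $2$-edge-connected block, and single-vertex blocks occur only as leaves of $\mathcal{T}$.

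Next I would fix the coloring. I color every bridge with color $1$. Inside each nontrivial block $B_i$ I take an ear decomposition $C_0,P_1,P_2,\dots$ and color exactly one edge of each ear with color $2$ and all its other edges with color $1$; a count of edges versus vertices shows the color-$1$ edges then form a spanning tree of $B_i$. I will use only two features of this block coloring: (P1) any two vertices of $B_i$ are joined inside $B_i$ by a path all of whose edges have color $1$; and (P2) any two vertices of $B_i$ are joined inside $B_i$ by a simple path using \emph{exactly one} edge of color $2$. Property (P1) is clear because the color-$1$ edges form a spanning tree. Property (P2) is the substantive point, and I would prove it by induction along the ears: the base case is a cycle $C_0$ with one distinguished color-$2$ edge $e_0$, where for any two vertices the arc through $e_0$ is a path with exactly one color-$2$ edge; in the inductive step, the single color-$2$ edge of a new ear splits the new internal vertices into a part attached (in color $1$) to each endpoint of the ear, and one routes through that color-$2$ edge, or borrows a one-color-$2$ path of the smaller graph, to reach any target.

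With the coloring in place I would verify conflict-free connectivity by a single device: for a pair $u,v$ I route the path almost entirely in color $1$ and \emph{inject exactly one} color-$2$ edge. Let $\ell$ be the number of bridges on the tree path of $\mathcal{T}$ between the blocks of $u$ and $v$. If $\ell=0$ then $u,v$ lie in a common block and (P2) already supplies a path with one color-$2$ edge. If $\ell\ge 1$, the $u$--$v$ path must traverse a fixed sequence of blocks joined by those $\ell$ bridges; I select one nontrivial block $B^{*}$ among them (such a block exists, for if $\ell\ge 2$ some internal block is nontrivial, and if $\ell=1$ at least one of the two end blocks is nontrivial unless $G=K_2$). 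Inside $B^{*}$ I route between the two relevant vertices by a path with exactly one color-$2$ edge using (P2), inside every other visited block I route by an all-color-$1$ path using (P1), and I cross each bridge (color $1$). Because the blocks are pairwise vertex-disjoint and each is entered once, the concatenation is a simple path, and it uses color $2$ exactly once; hence it is conflict-free.

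The step I expect to be the main obstacle is property (P2): guaranteeing, inside a $2$-edge-connected block, a \emph{simple} path between every pair of vertices that meets the color-$2$ set in exactly one edge. Existence is intuitively clear from $2$-edge-connectivity, but turning it into a clean induction requires careful bookkeeping to keep the routed path simple when both endpoints are new internal vertices of the same ear, where one must detour through the ambient graph rather than re-enter the ear. Everything else is routine: the reduction to the upper bound, the identification of the hypothesis with a matching of bridges, and the global ``inject one color-$2$ edge'' argument all go through cleanly once (P1) and (P2) are available.
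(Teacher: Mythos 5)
A point of order first: this paper never proves Theorem \ref{cjv}. It is quoted from \cite{CJV} and used as a black box in Section 2.2, to conclude that the spanning subgraph formed by the Hamiltonian cycle of $G[V_2]$ together with the matching $M$ has conflict-free connection number $2$. So there is no in-paper proof to compare yours against; I can only judge your argument on its own terms and against the route the surrounding results suggest (namely, quoting the $2$-connected or $2$-edge-connected case, Theorems \ref{thd2} and \ref{th0}, and then handling the bridges).

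Judged that way, your proposal is essentially correct, and it is more self-contained than the citation route: instead of invoking Theorem \ref{th0} for the nontrivial blocks, you re-derive it in the stronger form that the gluing actually requires. That strengthening is the key insight, and you identified it correctly: conflict-free colorings of blocks cannot be concatenated naively, because the color that is unique on the subpath inside one block may be repeated inside another; what one needs is precisely your pair (P1) (an all-color-$1$ path between any two block vertices) and (P2) (a path with exactly one color-$2$ edge between any two block vertices), after which the ``inject exactly one color-$2$ edge'' concatenation, with all bridges colored $1$ and the matching hypothesis forcing every internal node of the bridge tree to be a nontrivial block, goes through. Three repairs are needed, none fatal. (i) Your bridgeless components are $2$-edge-connected but need not be $2$-connected, so the ear decomposition must allow closed ears (a cycle attached at a single vertex); your phrase ``each endpoint of the ear'' presumes open ears, though the closed-ear case of (P2) is if anything easier. (ii) The hard case of (P2) that you flag does work exactly as you sketch: when both endpoints are internal to the same ear and the color-$2$ edge $f$ lies outside the middle segment, exit the ear through $f$, cross the ambient graph on its color-$1$ spanning tree between the two attachment vertices, and re-enter the ear from the other end; the three segments meet only at the attachment vertices, so the path is simple. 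This needs to be written out, but there is no obstruction. (iii) When the $u$--$v$ path consists of a single bridge (e.g., $v$ is a leaf block attached at $u$, so entry and exit of the adjacent block coincide), your scheme cannot inject a color-$2$ edge at all; you must observe separately that a one-edge path is trivially conflict-free. With these patches your argument is a complete and elementary proof, arguably giving more (an explicit coloring and the reusable invariants (P1)--(P2)) than the black-box derivation from Theorem \ref{th0}, at the cost of redoing the $2$-edge-connected case by hand.
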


In this paper, we mainly study the value of conflict-free connection number of random graph $G(n,p)$, when $p$ belongs to different ranges.
The following theorem is a classical result on the connectedness of a random graph.
\begin{thm}\label{ppt}\cite{ER}
Let $p=(\log n +a)/n$. Then
\begin{equation*}
Pr[G(n,p)\  is \ connected)]\rightarrow
\left\{
  \begin{array}{ll}
   e^{-e^{-a}} & \hbox{ if $|a|=O(1)$,} \\
    0 & \hbox{  $a\rightarrow -\infty$}, \\
    1 & \hbox{ $a\rightarrow +\infty$.}
  \end{array}
\right.
\end{equation*}
\end{thm}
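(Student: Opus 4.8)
The plan is to prove Theorem~\ref{ppt} by reducing connectivity to the absence of isolated vertices, and then analyzing the number of isolated vertices via the method of moments. Writing $p=(\log n+a)/n$, the first step is to observe that the probability that a fixed vertex is isolated equals $(1-p)^{n-1}$, which is asymptotic to $e^{-pn}=e^{-(\log n+a)}=e^{-a}/n$. Hence, if $X$ denotes the number of isolated vertices, then $E[X]\sim n\cdot e^{-a}/n = e^{-a}=:\lambda$. The heuristic is that isolated vertices are the dominant obstruction to connectivity in this regime, so $Pr[G(n,p)\text{ connected}]$ should match $Pr[X=0]$.

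Second, I would compute the factorial moments of $X$ to upgrade this heuristic. For $r$ fixed vertices, the probability that all of them are isolated is the probability that all edges meeting them are absent, namely $(1-p)^{\binom{n}{2}-\binom{n-r}{2}}=(1-p)^{rn-r(r+1)/2}$. Summing over choices of the $r$ vertices gives $E[(X)_r]=(n)_r(1-p)^{rn-r(r+1)/2}$, and since $(n)_r\sim n^r$ while $(1-p)^{rn}\sim e^{-a r}n^{-r}$ (using $np^2\to 0$), we get $E[(X)_r]\to (e^{-a})^r=\lambda^r$. As all factorial moments converge to those of a Poisson$(\lambda)$ law, the method of moments yields $X\xrightarrow{d}\mathrm{Poisson}(\lambda)$, and in particular $Pr[X=0]\to e^{-\lambda}=e^{-e^{-a}}$.

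Third, and this is the crux, I would show that the events ``$G(n,p)$ is connected'' and ``$X=0$'' asymptotically coincide. One inclusion is immediate. For the converse I would bound the probability that $G$ is disconnected yet has no isolated vertex: then $G$ has a smallest component $S$ with $2\le |S|\le n/2$, which is connected (so contains a spanning tree) and sends no edge across its boundary. By Cayley's formula and a union bound, the expected number of components of size $k$ is at most $\binom{n}{k}k^{k-2}p^{k-1}(1-p)^{k(n-k)}$, and I would prove that $\sum_{k=2}^{\lfloor n/2\rfloor}$ of these terms tends to $0$, splitting the range into small and large $k$ and using $\binom{n}{k}\le (en/k)^k$ together with $(1-p)^{k(n-k)}\le e^{-pk(n-k)}$.

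Finally I would assemble the three regimes. When $|a|=O(1)$, $\lambda=e^{-a}$ is bounded and the Poisson limit gives $e^{-e^{-a}}$. When $a\to+\infty$, $\lambda\to 0$, so Markov's inequality gives $Pr[X\ge 1]\le E[X]\to 0$; combined with the component bound, $G$ is connected w.h.p.\ and the limit is $1$. When $a\to-\infty$, $\lambda\to\infty$, and a second-moment (Chebyshev) argument forces $X>0$ w.h.p., so $G$ is disconnected w.h.p.\ and the limit is $0$. The main obstacle is the third step: obtaining a first-moment bound uniform over $2\le k\le n/2$ that is sharp enough to kill the intermediate components. The delicate point is the smallest terms, $k=2$, where the naive ``no crossing edges'' estimate $\binom{n}{2}(1-p)^{2(n-2)}$ stays bounded away from $0$ and one genuinely needs the spanning-tree factor $p^{k-1}$ to make the sum vanish.
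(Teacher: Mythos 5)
Your proof is essentially correct, but note first that the paper itself contains no proof of this statement to compare against: Theorem~\ref{ppt} is quoted as a classical result of Erd\H{o}s and R\'{e}nyi \cite{ER}, and the paper only uses it as a black box (to argue that the threshold for $cfc(G(n,p))=2$ coincides with the connectivity threshold). What you have reconstructed is the standard modern proof of that classical theorem (the one found, e.g., in Bollob\'{a}s's \emph{Random Graphs}): reduce connectivity to the absence of isolated vertices, establish $X\xrightarrow{d}\mathrm{Poisson}(e^{-a})$ by factorial moments, and kill the intermediate components of size $2\le k\le n/2$ by a first-moment bound with the spanning-tree factor $\binom{n}{k}k^{k-2}p^{k-1}(1-p)^{k(n-k)}$. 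Your identification of the delicate point is exactly right: at $k=2$ the naive isolated-edge count $\binom{n}{2}(1-p)^{2(n-2)}$ stays of order $e^{-2a}$, and it is the extra factor $p^{k-1}$ that makes the term $O((\log n)e^{-2a}/n)\to 0$. Two small points of care in a full writeup: (i) when $a\to+\infty$ arbitrarily fast (say $p$ bounded away from $0$), the asymptotic $(1-p)^{n-1}\sim e^{-pn}$ fails, so for that regime either use the one-sided inequality $1-p\le e^{-p}$ in the Markov and component bounds (which suffices), or reduce to slowly growing $a$ by monotonicity of connectivity; (ii) when $|a|=O(1)$ but $a(n)$ does not converge, the Poisson limit should be phrased as $Pr[X=0]-e^{-e^{-a(n)}}\to 0$ rather than as convergence of each side, a harmless reformulation that matches the statement as given. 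Neither point is a gap in the ideas; your outline is a faithful and complete plan for the cited theorem.
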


Since the concept of conflict-free connection coloring only makes sense when the  graph is connected, we only study on the conflict-free connection coloring of $G(n,p)$ which is w.h.p. connected.
Our main result is as follows.
\begin{thm}\label{thm1}
For sufficiently large $n$, $cfc(G(n,p))\le2$ if $p\ge\frac{\log n +\alpha(n)}{n}$, where $\alpha(n)\rightarrow \infty$.
\end{thm}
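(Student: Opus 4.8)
The plan is to reduce the whole statement to the structural criterion of Theorem \ref{cjv}. I would show that with high probability the subgraph $C(G)$ induced by the cut-edges of $G=G(n,p)$ is a disjoint union of single edges, i.e.\ a linear forest in which every component is a $K_2$; Theorem \ref{cjv} then gives $cfc(G)\le 2$ immediately. First, since $\alpha(n)\to\infty$, Theorem \ref{ppt} guarantees that $G$ is connected w.h.p., which is exactly what makes $cfc$ well defined and also forces (for $n\ge 3$) that no two vertices of degree $1$ are adjacent. Since $p=o(1)$, $G$ is also noncomplete w.h.p.

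The first real step is to prove that w.h.p.\ \emph{every} cut-edge of $G$ is a pendant edge, i.e.\ is incident to a vertex of degree $1$. A cut-edge with sides of sizes $k$ and $n-k$ corresponds to a vertex set $S$ with $2\le|S|=k\le n/2$ that is internally connected and joined to its complement by exactly one edge. Bounding the probability that $S$ is internally connected by its number $k^{k-2}$ of spanning trees, and the probability of exactly one crossing edge by $k(n-k)\,p\,(1-p)^{k(n-k)-1}$, the expected number of such ``balanced'' cut-edges is at most
\[
\sum_{k=2}^{\lfloor n/2\rfloor}\binom{n}{k}\,k^{k-2}\,p^{k-1}\,k(n-k)\,p\,(1-p)^{k(n-k)-1}.
\]
With $p=(\log n+\alpha(n))/n$ the factor $(1-p)^{k(n-k)}\approx e^{-k\log n}=n^{-k}$ cancels the growth of $\binom{n}{k}$, and a routine first-moment estimate (the same one underlying the connectivity threshold) shows this sum tends to $0$. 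Hence w.h.p.\ no cut-edge has both endpoints of degree $\ge 2$.

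The second step is to rule out two pendant edges meeting at a common vertex. The expected number of configurations $v\!-\!u\!-\!w$ with $\deg v=\deg w=1$ is at most $n\binom{n}{2}p^{2}(1-p)^{2(n-2)}=O\!\big((\log n)^{2}e^{-2\alpha(n)}/n\big)\to 0$, so w.h.p.\ no vertex has two neighbours of degree $1$. Combining the two steps: w.h.p.\ each cut-edge $uv$ has an endpoint $v$ of degree $1$, and its other endpoint $u$ is incident to no further cut-edge (such an edge would give $u$ a second degree-$1$ neighbour). Thus each cut-edge is an isolated $K_2$ in $C(G)$, so $C(G)$ is a linear forest with all components of order $2$, and Theorem \ref{cjv} gives $cfc(G)\le 2$; in the case $C(G)=\varnothing$ the graph is $2$-edge-connected and noncomplete, so Theorem \ref{th0} applies directly.

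I expect the main obstacle to be the first-moment estimate of the displayed sum in the first step: the individual terms are easy for constant $k$ and for $k$ near $n/2$, but one must control the sum uniformly over all intermediate split sizes to conclude that w.h.p.\ there are no cut-edges separating $G$ into two parts each of size at least $2$. The second step and the final assembly via Theorem \ref{cjv} are comparatively routine.
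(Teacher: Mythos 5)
Your proposal is correct, but it takes a genuinely different route from the paper. The paper never analyzes the cut-edges of $G(n,p)$ itself: it splits $V$ into small vertices (degree below $\frac{\log n}{10}$) and large vertices, proves via an expansion argument, P\'{o}sa's lemma and the Frieze--Krivelevich red-blue recoloring count that the induced subgraph $G[V_2]$ on the large vertices is w.h.p.\ Hamiltonian, and then forms a connected spanning subgraph $G'$ consisting of that Hamilton cycle plus a matching joining each small vertex to a large vertex; $C(G')$ is exactly this matching, so Theorem \ref{cjv} applied to $G'$ plus the monotonicity $cfc(G)\le cfc(G')$ finishes the proof. You instead verify the hypothesis of Theorem \ref{cjv} for $G$ directly, via two first-moment computations: (i) w.h.p.\ no connected set $S$ with $2\le |S|\le n/2$ sends exactly one edge to its complement, so every cut-edge is pendant; (ii) w.h.p.\ no vertex has two degree-one neighbours, so the pendant edges form a matching and $C(G)$ is a disjoint union of $K_2$'s. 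Your route is more elementary -- it avoids Hamiltonicity, rotations, and the removable-edge counting entirely -- and it yields sharper structural information about $G$ itself; what the paper's heavier machinery buys is an explicit sparse witness (cycle plus matching) on which the two-coloring can be displayed concretely. The sum you flag as the main obstacle does converge: writing $np=\log n+\alpha$, one has $\binom{n}{k}k^{k-2}p^{k-1}\cdot k(n-k)p(1-p)^{k(n-k)-1}\le 2nk^{-1}\bigl(e(\log n+\alpha)n^{-1/2}e^{-\alpha/2}\bigr)^{k}$ for all $2\le k\le n/2$, using $\binom{n}{k}\le (ne/k)^k$ and $(1-p)^{k(n-k)}\le e^{-pkn/2}$; this sums to $o(1)$ over $k\ge 3$, and for $k=2$ keeping the full exponent gives $O\bigl((\log n)^{2}e^{-2\alpha}/n\bigr)$. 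Two small repairs: your estimates implicitly fix $p=(\log n+\alpha(n))/n$, so for the whole range $p\ge(\log n+\alpha(n))/n$ you should either note that these first moments are decreasing in $p$ there, or dispose of larger $p$ by the coupling/monotonicity of $cfc$ (as the paper does via the Hamiltonicity threshold); and the remark ``$p=o(1)$, hence $G$ is noncomplete'' fails for, say, constant $p$, though this is immaterial since $cfc(K_n)=1\le 2$.
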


Recall that, for a graph property $P$, a function $p(n)$ is called a \emph{threshold function} of $P$ if:
\begin{itemize}
\item for every $r(n) = \Omega(p(n))$, $G(n, r(n))$ w.h.p. satisfies $P$; and

\item for every $r'(n) = o(p(n))$, $G(n, r'(n))$ w.h.p.
does not satisfy $P$.
\end{itemize}
From Theorem \ref{ppt} and Theorem \ref{thm1}, we can obtain that the threshold for $cfc(G(n, p))=2$ is equal to the threshold for $G(n, p)$ to be connected. The proof of Theorem \ref{thm1} is given in Section 2.

\section{Proof of Theorem \ref{thm1}}

For a vertex subset $S$ of a graph $G$, we use $e(S)$ to denote the number of edges of  the subgraph $G[S]$ induced by $S$ of $G$. For two disjoint vertex subsets $X$ and $Y$ of $G$, let $E(X,Y)$ be the set of edges with one endpoint in $X$ and the other in $Y$, and $e(X,Y)=|E(X,Y)|$. For vertex subsets $U\subset S$, $N(U,S)$ is the disjoint neighbor set of $U$ in $G[S]$, i.e.  $N(U,S)=\{w\in S-U:\ \exists u\in S\ and \ \{uw\}\in G[S]\}$ and $d_S(v)=|N(v)\cap S|$ is the degree of $v$ in $S$.

From Theorem \ref{th0}, we know that if the random graph is 2-connected, then the conflict-free connection number is at most 2. In fact, it is known that \cite{B} if $p=\frac{1}{n}\{\log n+\log\log n+\omega(n)\}$, then w.h.p. $G(n,p)$ is  Hamiltonian, where $\omega(n)\rightarrow \infty$. Since Hamiltonian is a monotone property, we obtain that w.h.p. $cfc(G(n,p))=2$ if $\frac{1}{n}\{\log n+\log\log n+\omega(n)\}\le p<1 $.
Thus in the sequel, we assume that $p=\frac{\log n +\alpha(n)}{n}$, where $\alpha(n)=o(\log n)$, and $\alpha(n)\rightarrow \infty$.

For ease of notation, let $G\in G(n,p)$ and denote by $V$ the vertex set of $G(n,p)$.
We call a vertex $u$ \emph{large} if its degree $d(u)\geq \frac{\log n}{10}$ and \emph{small} otherwise. Let $V_1$ denote the vertex-subset consisting of all the small vertices, and $V_2$ be the vertex-subset consisting of all the large vertices. Namely, $V=V_1\cup V_2$. We first present some structure properties of $G(n,p)$, then we will give a conflict-free coloring of $G(n,p)$, which uses exactly 2 colors.
\subsection{Structure properties of $G(n,p)$}
\begin{pro}\label{prop1}
The followings hold w.h.p.\@ in $G$:
\begin{itemize}
  \item [(1)]For any $S \subseteq V$, $|S|\le \frac{n}{38}$ implies $|E(G[S])|< \frac{|S|np}{25}$.
  \item [(2)]If $U, W \subseteq V$, $U\cap W= \emptyset$, $|U|, |W| \ge \frac{n}{\log\log n}$, then $e(U, W)>0$.
\end{itemize}
\end{pro}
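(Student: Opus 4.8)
The plan is to prove both statements by the first-moment method, since each asserts that a certain ``bad'' configuration w.h.p.\@ does not occur. Throughout I will use that $np=\log n+\alpha(n)=(1+o(1))\log n\to\infty$.

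For part (1), fix a size $s$ and a set $S$ with $|S|=s$. Then $e(G[S])$ is distributed as $\mathrm{Bin}\big(\binom{s}{2},p\big)$, and the event $\{e(G[S])\ge k\}$ forces some fixed collection of $k$ among the $\binom{s}{2}$ potential edges to be present simultaneously; hence $\Pr[e(G[S])\ge k]\le\binom{\binom{s}{2}}{k}p^{k}$. Taking $k=\lceil snp/25\rceil$, using $\binom{n}{s}\le (en/s)^{s}$ and $\binom{\binom{s}{2}}{k}\le (es^{2}/(2k))^{k}$, and noting that $es^{2}p/(2k)=25es/(2n)$ for this $k$, I would union bound over the $\binom{n}{s}$ choices of $S$:
\begin{equation*}
\Pr\Big[\exists\,S,\ |S|=s,\ e(G[S])\ge \tfrac{snp}{25}\Big]\le \Big(\frac{en}{s}\Big)^{s}\Big(\frac{25es}{2n}\Big)^{snp/25}=\exp\big(s\,B(s)\big),\qquad B(s)=\log\frac{en}{s}+\frac{np}{25}\log\frac{25es}{2n}.
\end{equation*}
Since $e(G[S])\le\binom{s}{2}$, only $s\ge \tfrac{2np}{25}$ contribute. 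On this range $B(s)=\text{const}+(\tfrac{np}{25}-1)\log s$ is increasing in $s$, so its maximum is at $s=n/38$, where $\tfrac{25es}{2n}=\tfrac{25e}{76}<1$; thus the second term equals $\tfrac{np}{25}\log\tfrac{25e}{76}\to-\infty$ while the first stays bounded, giving $B(n/38)<0$ for all large $n$. Consequently $s\,B(s)\le s_0\,B(n/38)\to-\infty$ for every admissible $s$, and summing the at most $n$ terms yields $o(1)$.

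For part (2), it suffices to treat pairs of the minimum size $m=\lceil n/\log\log n\rceil$, because if every pair of disjoint $m$-sets is joined by an edge then so is every pair of larger sets (pass to $m$-subsets). For fixed disjoint $U,W$ with $|U|=|W|=m$ we have $\Pr[e(U,W)=0]=(1-p)^{m^{2}}\le e^{-pm^{2}}$, so a union bound gives
\begin{equation*}
\Pr\big[\exists\,U,W\text{ disjoint},\ |U|=|W|=m,\ e(U,W)=0\big]\le \binom{n}{m}^{2}e^{-pm^{2}}\le \exp\Big(m\big(2\log\tfrac{en}{m}-pm\big)\Big).
\end{equation*}
Here $\log\tfrac{en}{m}=1+\log\log\log n$ while $pm=(1+o(1))\tfrac{\log n}{\log\log n}\to\infty$ far faster, so the bracket is $-(1+o(1))\tfrac{\log n}{\log\log n}$ and the bound is $\exp\big(-(1+o(1))\tfrac{n\log n}{(\log\log n)^{2}}\big)=o(1)$.

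The main obstacle is the delicacy of part (1): the threshold $\tfrac{snp}{25}$ lies only a constant factor above the mean $\binom{s}{2}p\le \tfrac{snp}{76}$ for $s\le n/38$, so the entropy factor $(en/s)^{s}$ and the large-deviation factor must be balanced by hand, and the constants $25$ and $38$ must be checked to force $B(s)<0$ uniformly on $[\tfrac{2np}{25},\tfrac{n}{38}]$ for all large $n$. Part (2) is comparatively routine, since there the missing-edge probability decays like $e^{-pm^{2}}$ with $pm^{2}\gg m\log(n/m)$, so the counting factor is swamped with a wide margin.
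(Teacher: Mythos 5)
Your proposal is correct, and at its core it is the same first-moment argument as the paper's: for (1) a union bound over vertex sets combined with a binomial tail estimate, and for (2) a union bound over pairs of disjoint sets together with $(1-p)^{st}\le e^{-pst}$. Indeed, your tail bound $\Pr[\mathrm{Bin}(N,p)\ge k]\le\binom{N}{k}p^k\le\left(\frac{eNp}{k}\right)^k$ coincides numerically with the Bollob\'{a}s inequality $\left(\frac{e}{\gamma}\right)^{\gamma Np}$ that the paper invokes with $\gamma=\frac{2n}{25s}$. Where you genuinely differ is the bookkeeping in part (1), and your version is in fact more careful than the paper's. The paper asserts the per-term bound $\left(\frac{ne}{s}\right)^s\left(\frac{25es}{2n}\right)^{(s-1)\log n/25}\le n^{-0.14(s-1)}$ uniformly for $2\le s\le \frac{n}{38}$, but this fails when $s=\Theta(n)$: at $s=\frac{n}{38}$ the decay rate supplied by the second factor is only $\frac{\log n}{25}\log\frac{76}{25e}\approx 0.0045\log n$ per unit of $s$, and it must additionally absorb the entropy term $s\log\frac{ne}{s}\approx 4.64\,s$, so the stated constant $0.14$ is off by more than an order of magnitude (the exponent only becomes negative once $\log n$ exceeds roughly $10^3$ --- harmless for a ``sufficiently large $n$'' statement, but not what is written). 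Your treatment --- discarding $s<\frac{2np}{25}$ because the event is then vacuous, observing that $B(s)$ is increasing so everything is controlled by $B(n/38)<0$, and bounding the sum by $n$ times the worst term $e^{s_0B(n/38)}$ --- sidesteps this pitfall and yields a clean, correct proof of the same estimate. In part (2), your reduction to pairs of exactly the minimum size $m=\lceil n/\log\log n\rceil$ (legitimate, by monotonicity under passing to subsets) replaces the paper's double sum over all admissible sizes; the exponential computation is otherwise identical, yours simply with fewer moving parts.
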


\begin{proof}
(1) The number of edges in an induced subgraph $G[S]$ with $|S|=s$ $(s\ge 2)$ is a binomial random variable with parameters $\binom{s}{2}$ and $p$. By Bollob\'{a}s \cite{B} (see page 14) we have that for large deviations of binomial random variables
$$\Pr\left[the \ number\  of\ edges\ in\ G[S]\ge\gamma \binom{s}{2}p \right]<\left(\frac{e}{\gamma}\right)^{\gamma \binom{s}{2}p}.$$ Setting $\gamma=\frac{2n}{25s},$ we obtain that
\begin{align*}
& \sum\limits_{s = 2}^{\frac{n}{{38}}}{\left( {\begin{array}{*{20}{c}}
n\\
s
\end{array}} \right)}\left(\frac{e}{\gamma}\right)^{\gamma \binom{s}{2}p}\le \sum\limits_{s = 2}^{\frac{n}{{38}}} {\left( \frac{ne}{s}\right)^s}\left( \frac{25es}{2n}\right)^{\frac{2n}{25s}\frac{s(s-1)}{2}\frac{\log n}{n}}\\
&= \sum\limits_{s = 2}^{\frac{n}{{38}}} {\left( \frac{ne}{s}\right)^s}\left( \frac{25es}{2n}\right)^{(s-1) \frac{\log n}{25}}\le\sum\limits_{s = 2}^{\frac{n}{{38}}} { e^{ -0.14(s-1){\log n}}}\\
&=\sum\limits_{s = 2}^{\frac{n}{{38}}} n^{-0.14(s-1)}=o(1),
\end{align*}
which implies that the statement of (1) w.h.p. holds.

(2) Let $\mathcal {A}$ denote the event that there exist two subsets $U, W \subseteq V$, $U\cap W= \emptyset$, $|U|, |W| \ge \frac{n}{\log\log n}$ and $e(U, W)=0$. Then
\begin{align*}
\Pr[\mathcal {A}]&\le \sum_{s \ge \frac{n}{\log \log n}}\sum_{t \ge \frac{n}{\log \log n}}\binom{n}{s}\binom{n-s}{t}(1-p)^{st}\\
&\le  \sum_{s \ge \frac{n}{\log \log n}}\sum_{t \ge \frac{n}{\log \log n}}{\left(\frac{ne}{s}\right)}^s{\left(\frac{ne}{t}\right)}^te^{-pst}\\
&\le \sum_{s \ge \frac{n}{\log \log n}}\sum_{t \ge \frac{n}{\log \log n}}{(ne)}^{s+t}{\left(\frac{1}{s}\right)}^s{\left(\frac{1}{t}\right)}^te^{-\frac{\log n}{n}\cdot \frac{n}{\log \log n}\cdot \frac{n}{\log \log n}}\\
&\le \sum_{s \ge \frac{n}{\log \log n}}\sum_{t \ge \frac{n}{\log \log n}}{(ne)}^{s+t}{\left(\frac{\log \log n}{n}\right)}^s{\left(\frac{\log \log n}{n}\right)}^te^{-\frac{n\log n}{{(\log \log n)}^2}}\\
&= \sum_{s \ge \frac{n}{\log \log n}}\sum_{t \ge \frac{n}{\log \log n}}{e}^{(s+t)\left(1+\log \log  \log n\right)}e^{-\frac{n\log n}{{(\log \log n)}^2}}\\
&\le \sum_{s \ge \frac{n}{\log \log n}}\sum_{t \ge \frac{n}{\log \log n}}{e}^{n\left(1+\log \log \log n\right)}e^{-\frac{n\log n}{{(\log \log n)}^2}}\\
&\le n^2{e}^{n\left(1+\log \log \log n\right)}e^{-\frac{n\log n}{{(\log \log n)}^2}}\le o(n^{-1}).
\end{align*}
\end{proof}

The following result focuses on the properties of small vertices in $G$.
\begin{pro}\label{prop2}
The followings hold w.h.p. in $G$.
\begin{itemize}
  \item [(1)]$|V_1|\le n^{0.4}$.
  \item [(2)]No pair of small vertices are adjacent or share a common neighbor.
  \item [(3)]There are at most $n^{0.5}$ edges incident with vertices in $V_1$.
\end{itemize}
\end{pro}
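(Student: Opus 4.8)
The three parts all rest on a single ingredient: a sharp upper bound on the probability that a fixed vertex is small. With $p=\frac{\log n+\alpha(n)}{n}$, the degree $d(u)$ of a fixed vertex $u$ is a binomial random variable $\mathrm{Bin}(n-1,p)$ with mean $\mu=(n-1)p\ge\log n$ for large $n$, while the threshold $\frac{\log n}{10}$ for smallness lies well below this mean. The plan is to apply the Chernoff bound for the lower tail in its sharp (Poisson-type) form $\Pr[\mathrm{Bin}(n-1,p)\le k]\le e^{-\mu}(e\mu/k)^{k}$ with $k=\frac{\log n}{10}$. Since $k\le\mu/10$, this yields an exponent at least $1-\frac{1}{10}\log(10e)\approx 0.67$, so that
$$\Pr[u \text{ is small}]\le n^{-0.66}$$
for all sufficiently large $n$. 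I would stress that the \emph{sharp} form of the bound is essential: the cruder estimate $e^{-\delta^2\mu/2}$ only gives exponent $\approx 0.4$, which is too weak for what follows.

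For part (1), linearity of expectation gives $\mathbb{E}[|V_1|]=n\cdot\Pr[u\text{ is small}]\le n^{0.34}$, and Markov's inequality then yields $\Pr[|V_1|\ge n^{0.4}]\le n^{0.34}/n^{0.4}=n^{-0.06}=o(1)$, proving $|V_1|\le n^{0.4}$ w.h.p. For part (2), I would use the first-moment method on the two forbidden configurations. For a pair of adjacent small vertices $u,v$, condition on the edge $uv$ (probability $p$): the residual degrees of $u$ and $v$ are then independent binomials on $n-2$ trials, each small with probability at most $n^{-0.66}$, so the expected number of such pairs is at most $\binom{n}{2}\,p\,(n^{-0.66})^2=O(n^{-0.3}\log n)=o(1)$. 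For two small vertices $u,v$ sharing a neighbor $w$, condition on the two edges $uw,vw$ (probability $p^2$); both $u$ and $v$ are then small with probability at most $(n^{-0.66})^2$, so the expected number of such triples is at most $n^3 p^2 (n^{-0.66})^2=O(n^{-0.3}(\log n)^2)=o(1)$. Since both expectations are $o(1)$, Markov forces both counts to be $0$ w.h.p.

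Part (3) is then immediate from the first two parts: by part (2) no two small vertices are adjacent, so every edge incident with $V_1$ joins a small vertex to a large one, whence the number of such edges equals $\sum_{v\in V_1}d(v)<|V_1|\cdot\frac{\log n}{10}$. Using $|V_1|\le n^{0.4}$ from part (1) together with $\frac{\log n}{10}<n^{0.1}$ for large $n$, this is at most $n^{0.4}\cdot n^{0.1}=n^{0.5}$, as claimed.

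The main obstacle is concentrated entirely in the opening tail estimate. The exponent $0.4$ in part (1) is tight enough that only the sharp Chernoff constant (strictly above $0.6$) closes the Markov gap, and this \emph{same} constant is precisely what makes the pair and triple first-moment sums in part (2) summable to $o(1)$. Once that degree tail bound is secured, everything else is routine first-moment bookkeeping, and in particular part (3) requires no new probabilistic input at all.
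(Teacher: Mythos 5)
Your proposal is correct, and for parts (2) and (3) it essentially reproduces the paper's argument: the paper also runs a first-moment bound over pairs of vertices at distance at most $2$ (its displayed estimate $\binom{n}{2}\bigl\{p(\cdots)^2+(n-2)p^2(\cdots)^2\bigr\}$ is exactly your adjacent-pair and common-neighbor counts rolled into one expression), and its part (3) is the same counting $|V_1|\cdot\frac{\log n}{10}\le n^{0.4}\cdot\frac{\log n}{10}<n^{0.5}$. The genuine difference is in part (1) and in how the per-vertex tail is packaged. The paper estimates $\Pr[d(u)\le\frac{\log n}{10}]$ by summing binomial terms directly (obtaining roughly $n^{-0.65}$, matching your Chernoff exponent $1-\frac{1}{10}\log(10e)\approx 0.67$), and then takes a union bound over all $\binom{n}{s}$ vertex subsets of size $s=\lceil n^{0.4}\rceil$, bounding the probability that such a set is entirely small by the $s$-th power of the per-vertex probability. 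Your route, linearity of expectation plus Markov, is simpler and in fact more robust: the paper's product bound implicitly treats the events $\{v\ \text{small}\}$, $v\in S$, as if they were independent, but these are decreasing events in the edge lattice and hence positively correlated (Harris/FKG), so the product of the individual probabilities is a priori a lower bound on their joint probability, not an upper bound; making that step rigorous requires the standard fix of replacing $d(v)$ by the degree of $v$ into $V\setminus S$ (independent binomials on $n-s$ trials), which is precisely the disjoint-edge-set device you invoke where it is genuinely needed, namely in part (2). The only thing the paper's formulation buys is a super-polynomially small failure probability $O(n^{-0.05s})$ for part (1) instead of your $n^{-0.06}$, which is irrelevant for the application; in exchange, your argument proves the same statement with one fewer union bound and no correlation caveat.
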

\begin{proof}
(1) Let $s=\lceil n^{0.4}\rceil$. Denote by $\mathcal{A}$ the event that there exists a vertex-subset $S$ with order $s$ such that every vertex $v\in S$ is small. Then $\mathcal{A}$ happens with probability
\begin{align*}
\Pr[\mathcal{A}]& \le\binom{n}{s}\left[\sum\limits_{k = 0}^{\frac{{\log n}}{{10}}} {\left( {\begin{array}{*{20}{c}}
n\\
k
\end{array}} \right)} {p^k}{(1 - p)^{n - 1 - k}}\right]^s\\
                & \le  {\left( {\frac{{ne}}{s}} \right)^s}{\left[ {\frac{{\log n}}{{10}}{{\left( {\frac{{10ne}}{{\log n}}} \right)}^{\frac{{\log n}}{{10}}}}{{\left( {\frac{{\log n + \alpha (n)}}{n}} \right)}^{\frac{{\log n}}{{10}}}}{e^{ - \frac{{\log n + \alpha (n)}}{n}\left( {n - 1 - \frac{{\log n}}{{10}}} \right)}}} \right]^s}  \\
                &\le  {\left( {\frac{{ne}}{s} \cdot \frac{{\log n}}{{10}}{{\left( {11e} \right)}^{\frac{{\log n}}{{10}}}}{e^{ - (\log n + \alpha (n) )+ \frac{{\log n + \alpha (n)}}{n} + \frac{{\log n}}{{10}} \cdot \frac{{\log n + \alpha (n)}}{n}}}} \right)^s}\\
                &\le{\left( {\frac{{ne}}{s} \cdot \frac{{\log n}}{{10}} \cdot {n^{\frac{3}{{10}}}} \cdot {n^{ - 1}} \cdot O(1)} \right)^s}\le O(n^{-0.05\cdot s}).
\end{align*}
That implies that w.h.p. $|V_1|\le n^{0.4}$.

(2) Let $\mathcal{B}$ denote the event that there exist two small vertices $x$, $y$ and the distance between $x$ and $y$ is at most 2.
We have
\begin{align*}
\Pr[\mathcal{B}]&\le \binom{n}{2}\Bigg\{p\left(\sum\limits_{i = 1}^{\frac{{\log n}}{{10}} } {\left( {\begin{array}{*{20}{c}}
{n - 2}\\
i
\end{array}} \right)} {p^i}{\left( {1 - p} \right)^{n - 2 - i}}\right)^2\\
                &\null+\binom{n-2}{1}p^2\left(\sum\limits_{i = 1}^{\frac{{\log n}}{{10}} } {\left( {\begin{array}{*{20}{c}}
{n - 3}\\
i
\end{array}} \right)} {p^i}{\left( {1 - p} \right)^{n - 3 - i}}\right)^2\Bigg\}\\
                                 &\le n^2\Bigg\{\frac{\log n +\alpha(n)}{n}\left[2\left(\frac{ne}
                                 {\frac{{\log n}}{{10}}}\right)^{\frac{{\log n}}{{10}}}p^{\frac{{\log n}}{{10}}}\left(1-p
                                 \right)^{n-2-\frac{{\log n}}{{10}}}\right]^2\\
                &\null+(n-2){\left(\frac{\log n +\alpha(n)}{n}\right)}^2\left[2\left(\frac{ne}
                                 {\frac{{\log n}}{{10}}}\right)^{\frac{{\log n}}{{10}}}p^{\frac{{\log n}}{{10}}}\left(1-p
                                 \right)^{n-2-\frac{{\log n}}{{10}}}\right]^2\Bigg\}\\
                                 &\le  n^2\left[\frac{\log n +\alpha(n)}{n}+n\left(\frac{\log n +\alpha(n)}{n}\right)^2\right]
                                 \left[2\binom{n}{\frac{{\log n}}{{10}}}p^{\frac{{\log n}}{{10}}}(1-p)^{n-2-\frac{{\log n}}{{10}}}\right]^2\\
                                 &\le\left[n(2\log n)+n(2\log n)^2\right]\left[2\left(\frac{ne}
                                 {\frac{{\log n}}{{10}}}\right)^{\frac{{\log n}}{{10}}}p^{\frac{{\log n}}{{10}}}\left(1-p
                                 \right)^{n-2-\frac{{\log n}}{{10}}}\right]^2\\
                                 &\le\left[n(2\log n)+n(2\log n)^2\right]\left[2\left(\frac{ne}
                                 {\frac{{\log n}}{{10}}}\right)^{\frac{{\log n}}{{10}}}\left(\frac{\log n }{n}\right)^{\frac{{\log n}}{{10}}}e
^{-\frac{\log n}{n}\left(n-\frac{{\log n}}{{10}}\right)}\right]^2\\
                                 &\le\left[n(2\log n)+n(2\log n)^2\right]n^{-1.3}\le n^{-0.2}.
\end{align*}
So w.h.p. no pair of small vertices are adjacent or share a common neighbor.

(3)  From (2), we get  that $V_1$ is w.h.p. an independent set, i.e., there is no edge in the induced subgraph $G[V_1]$. Since the degree of a small vertex is less than $\frac{\log n}{10}$, we have that the number of edges incident to $V_1$ is w.h.p. no more than
$$|V_1| \cdot \frac{\log n}{10} \le n^{0.4} \cdot \frac{\log n}{10}< n^{0.5}. $$
\end{proof}

Let $\mathscr{H}=\{G \in G(n,p)$: the conditions of Propositions \ref{prop1} and \ref{prop2} hold$\}$. From Proposition \ref{prop1} (2), we know that $V_1$ is an independent set in $G\in\mathscr{H}$. However, we claim that $G[V_2]$ is connected, in fact, $G[V_2]$ is Hamiltonian.  To prove this, we use the arguments similar to those of Cooper  et al. \cite{CF} and Frieze et al. \cite{FK}.
We regard the edges in $G$ as initially colored blue, but with the option of recoloring a set $R$ of the edges red. We require that the set $R$ of red edges is ``removable", which is defined as follows.

\begin{df}
\begin{itemize}
  \item [(1)] A set $R \subseteq E(G)$ is removable if
  \begin{itemize}
  \item [(i)]$R$ is a matching, and
  \item [(ii)]no edge of $R$ is incident with a small vertex, and
  \item [(iii)]$|R|=\lceil n^{0.4}\rceil$.
  \end{itemize}
  \item [(2)]Let $G_B[V_2]$ denote the subgraph of $G[V_2]$ induced by blue edges.
  \item [(3)]$N_B(U, V_2)$ denotes the disjoint neighbor set of $U$ in $G_B[V_2]$.
\end{itemize}
\end{df}

The following definitions and results are taken from P\'{o}sa \cite{P} and Frieze et al. \cite{FK}.
\begin{df}
Let $\Gamma=(V,E)$ be a non-Hamiltonian graph with a longest path of length $\ell$. A pair $\{u,v\}\notin E$ is called a hole if adding $\{u,v\}$ to $\Gamma$ creates a graph $\Gamma'$ which is Hamiltonian or contains a path longer than $\ell$.
\end{df}

\begin{df}
A graph $\Gamma=(V,E)$ is called a $(k,c)$-expander if $|N(U)|\ge c|U|$ for every subset $U \subseteq V(G)$ of cardinality $|U|\le k$.
\end{df}
Frieze et al. proved the relation between non-Hamiltonian $(k,c)$-expanders and holes, and their result is as follows.
\begin{lem}\cite{FK}\label{lem6}
Let $\Gamma$ be a non-Hamiltonian connected $(k,2)$-expander. Then $\Gamma$ has at least $\frac{k^2}{2}$ holes.
\end{lem}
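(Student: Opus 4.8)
The plan is to prove this with P\'osa's rotation--extension technique: first produce a large set of path-endpoints reachable by rotations, then pair these endpoints up to manufacture holes.

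First I would fix a longest path $P = v_0 v_1 \cdots v_\ell$ in $\Gamma$ (one exists, and since $\Gamma$ is non-Hamiltonian it does not close into a spanning cycle). Keeping the endpoint $v_0$ fixed, I perform rotations: whenever the current moving endpoint is adjacent to some interior vertex $v_i$ of the current longest path, reversing the segment beyond $v_i$ yields another longest path with a new endpoint. Let $R = R(v_0)$ be the set of all vertices that can serve as the moving endpoint of a longest path obtained from $P$ by a sequence of rotations. The structural heart of the matter is \emph{P\'osa's Lemma}: every neighbour in $\Gamma$ of a vertex of $R$ is either in $R$ or is a path-neighbour of a vertex of $R$, and a $2$-to-$1$ accounting of these path-neighbours yields $|N(R)| < 2|R|$.

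Next I would bring in the expander hypothesis. Since $\Gamma$ is a $(k,2)$-expander, every $U$ with $|U| \le k$ satisfies $|N(U)| \ge 2|U|$. If $|R| \le k$ held, this would force $2|R| \le |N(R)|$, contradicting $|N(R)| < 2|R|$. Hence $|R| > k$, so at least $k$ vertices are reachable as endpoints by rotation from $v_0$. Then, for each $b \in R(v_0)$, I take the associated longest path from $v_0$ to $b$ and repeat the argument with $b$ fixed; the same P\'osa--expander contradiction gives a set $R(b)$ of at least $k$ reachable endpoints $a'$, each joined to $b$ by a longest path on the vertex set $V(P)$.

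Finally I would harvest the holes. For $b \in R(v_0)$ and $a' \in R(b)$ there is a longest path between $a'$ and $b$, whose closing pair $\{a',b\}$ spans a cycle on $V(P)$. If $\{a',b\}$ were an edge of $\Gamma$, this cycle would already lie in $\Gamma$, making $\Gamma$ Hamiltonian when it spans $V(\Gamma)$ and, by connectivity, giving a path longer than $P$ otherwise, both impossible; thus $\{a',b\}$ is a non-edge. Adding it realizes the cycle, which is a Hamilton cycle if $V(P) = V(\Gamma)$ and otherwise opens, through a vertex off the cycle supplied by connectivity, into a path longer than $\ell$; so every such $\{a',b\}$ is a hole. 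With $|R(v_0)| \ge k$ choices of $b$ and $|R(b)| \ge k$ choices of $a'$ for each, dividing by $2$ for unordered double-counting gives at least $k^2/2$ holes. I expect the main obstacle to be the first paragraph: establishing the sharp bound $|N(R)| < 2|R|$ needs careful bookkeeping of how the path itself mutates under successive rotations, verifying that no neighbour of a reachable endpoint escapes $R$ together with the path-neighbours of $R$, and that the factor $2$ there exactly matches the expansion constant; a secondary check is that the pairs $\{a',b\}$ remain distinct enough not to erode the final count below $k^2/2$.
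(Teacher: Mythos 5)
Your proposal is correct and is essentially the proof of Frieze and Krivelevich \cite{FK}, which the paper cites for this lemma without reproducing the argument: fix one endpoint, use P\'osa's rotation lemma to get $|N(R)| \le 2|R|-1 < 2|R|$, invoke the $(k,2)$-expansion to force $|R| > k$, repeat from each reachable endpoint $b$ to get $|R(b)| > k$, note that every closing pair must be a non-edge (else connectivity plus the resulting cycle would contradict maximality of the path or non-Hamiltonicity) and hence a hole, and halve the ordered count $k^2$ to account for unordered pairs. The bookkeeping you flag as the main obstacle is exactly the standard P\'osa lemma, whose $2|R|-1$ bound matches the expansion constant $2$ as you anticipated.
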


Now we prove the following lemma, which is important to our proof of Theorem \ref{thm1}.
\begin{lem}\label{claim1}
The induced subgraph $G[V_2]$ of $G$ is w.h.p. Hamiltonian.
\end{lem}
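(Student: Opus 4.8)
The plan is to deduce the Hamiltonicity of $G[V_2]$ from the rotation--extension method, using Lemma \ref{lem6} to turn good vertex expansion into an abundance of holes and then closing up a Hamilton cycle by sprinkling a handful of independent random edges. It is convenient to reveal $G$ in two rounds, writing $G=G_1\cup G_2$ with $G_1\in G(n,p_1)$ and $G_2\in G(n,p_2)$ independent and $1-p=(1-p_1)(1-p_2)$; I would take $p_2=C/n$ for a large absolute constant $C$, so that $p_1n=\log n+\alpha(n)-C+o(1)$ still satisfies $p_1n-\log n\to\infty$. Then $G_1$ enjoys the conclusions of Propositions \ref{prop1} and \ref{prop2} w.h.p.\ (with largeness measured inside $G_1$), while $G_2$ contributes $\Theta(n)$ further edges inside $V_2$ that are independent of $G_1$ and will serve as boosters. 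Before anything else I record the minimum degree inside $V_2$: by Proposition \ref{prop2}(2) no large vertex can be adjacent to two distinct small vertices, so passing from $G_1$ to $G_1[V_2]$ costs each vertex at most one neighbour, and $\delta(G_1[V_2])\ge \frac{\log n}{10}-1$.

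Next I would show that $G_1[V_2]$ is a connected $(k,2)$-expander with $k=\lfloor n/3\rfloor$, checking $|N(U,V_2)|\ge 2|U|$ in two overlapping ranges. For a \emph{small} set, $|U|\le n/114$, suppose $|N(U,V_2)|<2|U|$; then every edge meeting $U$ lies inside $S=U\cup N(U,V_2)$, a set of fewer than $3|U|\le n/38$ vertices, and the minimum-degree bound forces $S$ to contain more edges than the sparsity estimate of Proposition \ref{prop1}(1) permits. For a \emph{large} set, $n/\log\log n\le |U|\le n/3$, a failure would leave $W:=V_2\setminus(U\cup N(U,V_2))$ with $|W|>|V_2|-3|U|\ge n/\log\log n$ and no edge joining $U$ and $W$, contradicting Proposition \ref{prop1}(2). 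Since $n/\log\log n<n/114$ for large $n$, the two ranges overlap and together cover all $|U|\le n/3$. Connectivity is then immediate: expansion rules out a component of order at most $k$, so two components would each have more than $n/\log\log n$ vertices, once more contradicting Proposition \ref{prop1}(2).

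Now I would run the booster process. By Lemma \ref{lem6}, as long as the current graph is a non-Hamiltonian connected $(k,2)$-expander it has at least $k^2/2=\Theta(n^2)$ holes, so a constant proportion of all vertex pairs are boosters whose addition lengthens a longest path or completes a Hamilton cycle; crucially, adding edges never destroys connectivity or the expander property, so this remains true throughout. Revealing the pairs of $G_2$ inside $V_2$ one at a time, each newly present pair is, conditionally on the past, a uniformly random pair and hence a current booster with probability bounded below by an absolute constant. At most $n$ successful boosters suffice to grow any longest path into a Hamilton cycle, and $G_2[V_2]$ offers $\Theta(n)$ independent pairs, so a routine concentration argument (stochastic domination by a binomial) produces the required number of successes w.h.p. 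Hence $G[V_2]=G_1[V_2]\cup G_2[V_2]$ is w.h.p.\ Hamiltonian, which is the assertion of Lemma \ref{claim1}. Running the same argument after deleting a removable matching $R$ shows the stronger statement that $G_B[V_2]$ is Hamiltonian, the form used later in the coloring.

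The main obstacle is the uniform factor-$2$ expansion, and specifically the small-set range: the naive first-moment bound over the location of a confining neighbourhood is too weak for sets of intermediate size (around $\sqrt n$), so the whole burden falls on the interplay between $\delta(G_1[V_2])$ and the sparsity estimate of Proposition \ref{prop1}(1), whose constants must be balanced carefully to yield the full factor $2$ rather than a weaker expansion. A secondary, essentially bookkeeping, point is to keep the booster edges independent of the edges that build the expander; this is exactly what the two-round exposure is for, and the counting is comfortable only because $k=\Theta(n)$ makes the holes a positive fraction of all pairs, so that $\Theta(n)$ sprinkled edges already suffice and the loss $C$ in $p_1n$ is harmless since $\alpha(n)\to\infty$.
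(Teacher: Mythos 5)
Your strategy is genuinely different from the paper's. You use a two-round exposure $G=G_1\cup G_2$, build an expander from $G_1$, and sprinkle the $G_2$-edges as boosters via Lemma \ref{lem6}; the paper never splits the probability space, but instead fixes $G$, designates a removable red matching $R$ inside $G$ itself, and runs a Cooper--Frieze style counting argument (comparing how many red-blue colorings a non-Hamiltonian $G$ admits with how many ways red edges can be added back to a blue graph while avoiding all $\frac{1}{2}(n/150)^2$ holes). The paper's choice is deliberate, and it is exactly where your first gap lies: $V_2$ is defined by degrees \emph{in $G$}, and it does not survive the exposure. What you prove is Hamiltonicity of $G[V_2']$, where $V_2'$ is the set of $G_1$-large vertices; w.h.p. $V_2'\neq V_2$, since w.h.p. there are vertices whose $G_1$-degree falls just below $\frac{\log n}{10}$ and which become large once the $G_2$-edges arrive, and Hamiltonicity is not monotone under adding vertices. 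So Lemma \ref{claim1} as stated does not follow. You flag this (``largeness measured inside $G_1$''), but the patch is not free: if small/large are redefined via $G_1$ throughout, the matching construction after the lemma breaks as written, because two $G_1$-small vertices may share a common neighbour \emph{in $G$} through a $G_2$-edge, so the property ``at most one small neighbour per large vertex'' is only guaranteed in $G_1$; one must then build $M$ from $G_1$-edges and say so. All repairable, but this dependence of $V_2$ on the whole graph is precisely what the paper's within-one-graph counting argument is engineered to avoid.

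The second gap is quantitative: your small-set expansion does not follow from Proposition \ref{prop1}(1), and no ``careful balancing'' of the given constants can make it do so. If $|N(U,V_2)|<2|U|$ and $S=U\cup N(U,V_2)$, the degree bound forces $e(G[S])\ge\frac{1}{2}|U|\cdot\frac{\log n}{11}=\frac{|U|\log n}{22}$, while Proposition \ref{prop1}(1) only excludes $e(G[S])\ge\frac{|S|np}{25}$; since $|S|$ may be as large as $3|U|$ and $np\ge\log n$, the excluded threshold is at least $\frac{3|U|\log n}{25}$, and $\frac{1}{22}<\frac{3}{25}$, so no contradiction arises --- the proposition tolerates more edges than the degree bound produces. (In fairness, the paper's own claim of $3$-fold expansion from the same two ingredients suffers the same defect, since $\frac{1}{22}<\frac{4}{25}$.) The repair is to prove a sharper sparsity estimate than Proposition \ref{prop1}(1) --- for instance, $e(G[S])<\frac{|S|\log n}{200}$ for all $|S|\le \frac{n}{1000}$, which the same first-moment computation gives because such sets have expected edge count at most about $\frac{|S|\log n}{2000}$ --- and to shrink the expander parameter accordingly; since $k$ remains $\Theta(n)$, your booster count is unaffected. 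Two smaller points: with $k=\lfloor n/3\rfloor$ your large-set case is vacuous near the top ($|V_2|-3|U|$ can be negative), so take $k=n/4$ or smaller; and the booster phase itself (monotonicity of expansion under edge addition, at most $n$ successes needed, stochastic domination by a binomial with $C$ a large constant) is standard and fine.
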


\begin{proof}
Let $G \in \mathscr{H}$, for any sets $U$ and $S$, such that $U \subseteq S \subset V$ and $|U| \le \frac{n}{150}$. Let $F \subset E(G[S])$ and consider the graph $H=(S,F)$. If $U$ satisfies that the degree of $w$ in $H$ is at least $\frac{\log n}{11}$ for all $w \in U$, then, by Proposition \ref{prop1} (1), $|N(U, S)| \ge 3|U|$ in $H$. Moreover, by Proposition \ref{prop2} (2), each vertex $w \in U$ has at most one neighbor in $V_1$. We have $d_{V_2}(w) \ge \frac{\log n}{10}-1\ge \frac{\log n}{11}$.  Hence, we obtain that there are at least $3|U|$ neighbors of $U$ in $V_2$. Thus the removal of $\min\{|R|, |U|\}$ removable edges makes $|N_B(U, V_2)| \ge 2|U|$. So for $U \subseteq V_2$, $|U| \le \frac{n}{150}$, we have that $|N_B(U, V_2)| \ge 2|U|$. Hence, if $G[V_2]$ is not connected, then the smallest component cannot consist of less than $\frac{n}{150}$ vertices.
On the other hand, by Proposition \ref{prop1} (2), any two sets of vertices of size at least $\frac{n}{\log\log n}$ must be connected by an edge. So $G[V_2]$ is connected.

Since  for any $U \subseteq V_2$, $|U| \le \frac{n}{150}$, we have that $|N_B(U, V_2)| \ge 2|U|$, it is easy to get that $G[V_2]$ is a $(\frac{n}{150}, 2)$-expander. Combine with Lemma \ref{lem6}, $G[V_2]$ has at least $\frac{1}{2}(\frac{n}{150})^2$ holes depending only on $G_B[V_2]$. We define the set $\mathscr{B}$ to be those $G \in \mathscr{H}$ for which the subgraph $G[V_2]$ is not Hamiltonian. Our aim is to prove the following equation:
\begin{equation}\label{lem7}
\frac{|\mathscr{B}|}{|G(n,p)|}=o(1).
\end{equation}

To prove equation \eqref{lem7}, let $R$ be a set of red edges of $G$ and satisfying the property $P$ such that

(i) $R$ is removable, and

(ii) $\ell(G[V_2])=\ell(G_B[V_2])$,

where $\ell(H)$ is the length of a longest path in the graph $H$.

Let $\mathscr{C}$ be the set of all red-blue colorings of $\mathscr{B}$ which satisfy $P$. Let $\ell=\ell(G[V_2])$, we have $\ell < |V_2|$. Recall that by Proposition \ref{prop2} (3), there are at most $\mu=\lceil n^{0.5}\rceil$ edges incident with small vertices. Let $m$ be the number of edges in $G$, and $\Delta$ be the maximum degree of $G$. It is known that $\Delta$ is w.h.p. at most $3np$ (see e.g. \cite{B}). Set $r=|R|$. Since $R$ is a matching, we can choose it in at least
\begin{align*}
&\frac{1}{r!}(m-\ell-\mu)(m-\ell-\mu-2\Delta)
\ldots(m-\ell-\mu-2(r-1)\Delta)\\
&\ge \frac{1}{r!}(m-|V_2|-\mu)(m-|V_2|-\mu-2\Delta)
\ldots(m-|V_2|-\mu-2(r-1)\Delta)\\
&\ge \frac{(m-|V_2|)^r}{r!}(1-o(1))
\end{align*}
ways.
Hence, $$|\mathscr{C}|\ge |\mathscr{B}|\frac{(m-|V_2|)^r}{r!}(1-o(1)).$$

Consider that we fix the blue subgraph. Then, by the definition of holes, we have to avoid replacing at least $\frac{1}{2}(\frac{n}{150})^2$ edges when adding back the red edges in order to construct a red-blue coloring satisfying property $P$. Thus
$$|\mathscr{C}|\le \binom{\binom{n}{2}}{m-r}\binom{\binom{n}{2}
-(m-r)-\frac{1}{2}(\frac{n}{150})^2}{r}.$$
It follows that
\begin{align*}
\frac{|\mathscr{B}|}{|G(n,p)|}&\le \frac{\sum\limits_{m=\frac{1}{100}\binom{n}{2}p}^{\binom{n}{2}}
\left[\binom{\binom{n}{2}}{m-r}\binom{\binom{n}{2}-(m-r)
-\frac{1}{2}(\frac{n}{150})^2}{r}
\Bigg/\frac{(m-|V_2|)^r}{r!}(1-o(1))\right]}{\binom{\binom{n}{2}}{\binom{n}{2}p}}\\
\end{align*}
Note that $\frac{\left[\binom{\binom{n}{2}}{m-r}\binom{\binom{n}{2}-(m-r)
-\frac{1}{2}(\frac{n}{150})^2}{r}
\Bigg/\frac{(m-|V_2|)^r}{r!}(1-o(1))\right]}{\binom{\binom{n}{2}}{\binom{n}{2}p}}
\le O(e^{-\frac{r}{150^2}+\frac{nr}{(n-1)\log n}})
$ (see \cite{CF}) and $O(e^{-\frac{r}{150^2}+\frac{nr}{(n-1)\log n}})=o(n^{-\theta})$ for any constant $\theta>0$.
Thus, for any constant $\theta>3$, we have $$\frac{|\mathscr{B}|}{|G(n,p)|}\le
\sum\limits_{m=\frac{1}{100}\binom{n}{2}p}^{\binom{n}{2}}{o(n^{-\theta})}\le
n^2o(n^{-\theta})\le o(n^{-1}).$$
The proof is thus complete.
\end{proof}

\subsection{Color the edges of $G(n,p)$}

From Proposition \ref{prop2}, we can obtain that every small vertex is adjacent to a large vertex, and there is at most one small vertex among the neighbors of a large vertex. Thus, we can find a matching $M$ consisting of $|V_1|$ edges in $G$ such that for every edge $e$ in $M$, one endpoint of $e$ is small and the other endpoint is large. Let $s=|M|=|V_1|$. Denote the large vertices in $M$ by $u_1,u_2,\ldots,u_s$ and denote the small vertices in $M$ by $v_1,v_2,\ldots,v_s$. Without loss of generality, we assume that for every $i\in \{1,2,\cdots, s\}$, $\{u_iv_i\}$ is an edge in $M$. Denote the Hamiltonian cycle of $G[V_2]$ by $C$. Then the edge set $E(C)\bigcup E(M)$ induces a connected spanning subgraph $G'$ of $G$, and $C(G')$ satisfies the condition of Theorem \ref{cjv}. Therefore, by the monotone property one has that $cfc(G)\le
cfc(G')=2$.

To prove $cfc(G) \le 2$, one can also easily give $G$ an edge-coloring with 2 colors and verify that edge-coloring is conflict-free. Denote the Hamiltonian cycle of $G[V_2]$ by $C$. Let $e$ be an edge of $C$, we color $e$ with color 2, and the other edges in $C$ and $M$ with color 1.  It is easy to get that under this partial coloring, for every two distinct large vertices $x$ and $y$, there is a
$x-y$ path in $C$ containing the edge $e$. So this $x-y$ path  is conflict-free. For any small vertex $v_i$ and large vertex $z$, combining the edge $v_iu_i$ with the conflict-free $u_i-z$ path, we obtain a   conflict-free path connecting $v_i$ and $z$. Similarly, by using the edges in $M$ and the conflict-free path along $C$, we can find a conflict-free path connecting every pair of small vertices. Hence,  we can obtain that $cfc(G(n,p))\le 2$.
Thus Theorem \ref{thm1} follows. \qed

\vspace{3mm}

\noindent {\bf Acknowledgement.}  The first author is partially supported by
 Natural Science Foundation of Jiangsu Province (No. BK20170860), National Natural Science Foundation of China (No. 11701143), and Fundamental Research Funds for the Central Universities (No. 2016B14214). The second author is partially supported by NSFC No.11871034, 11531011 and NSFQH No.2017-ZJ-790.

\end{document}